\newtheorem{thm}{Theorem}[section]
\newtheorem{lem}[thm]{Lemma}
\newtheorem{prop}[thm]{Proposition}
\newtheorem{cor}[thm]{Corollary}
\theoremstyle{definition}
\newtheorem{ex}[thm]{Example}
\newtheorem{remark[thm]}{Remark}
\newtheorem{assu-nota}[thm]{Assumption--Notation}
\newtheorem{step}{Step}
\newcommand{\inv}{^{-1}}
\newcommand{\Z}{\mathbb Z}
\newcommand{\OO}{\mathcal O}
\newcommand{\ct}{\chi_{\rm top}}
\DeclareMathOperator{\Alb}{Alb}
\DeclareMathOperator{\Pic}{Pic}
\DeclareMathOperator{\pr}{pr}
\newcommand{\epsi}{\varepsilon}
\newcommand{\si}{\sigma}
\newcommand{\dalb}{\delta_{\rm alb}}
\renewcommand{\>}{\!\!>}
\numberwithin{equation}{section}
\title[A uniform bound on the canonical degree \dots]{A uniform bound on the canonical degree of Albanese defective curves on surfaces}
\author{Margarida Mendes Lopes and Rita Pardini}
\thanks{
The  first author is a member of the Center for Mathematical
Analysis, Geometry and Dynamical Systems (IST/UTL); the second author is a member of G.N.S.A.G.A. of In.D.A.M.. This research was partially supported by  FCT (Portugal) through program POCTI/FEDER and Project 
PTDC/MAT/099275/2008 and by italian MIUR through PRIN 2008 project ``Geometria delle variet\`a algebriche e dei loro spazi di moduli". }
\begin{document}

\begin{abstract} Let $S$ be a minimal complex surface of general type with irregularity $q\ge 2$ and let $C\subset S$ be an irreducible curve of geometric genus $g$. Assume that $C$ is {\em Albanese defective}, i.e., that the image of $C$ via the Albanese map does not generate the Albanese variety $\Alb(S)$; we obtain a linear upper bound in terms of $K^2_S$ and $g$ for the canonical degree $K_SC$ of $C$. As a corollary,  we obtain a bound for the canonical degree of curves with $g\le q-1$, thereby generalizing and sharpening the main result of \cite{lu}.
\medskip

\noindent{\em 2000 Mathematics Subject Classification:} Primary 14J29; Secondary 14C20, 32C25.

\noindent{\em Keywords:} curves on irregular surfaces, uniform bound on canonical degree, irregular surface.
\medskip

\end{abstract}

\maketitle

\section{Introduction }

Let $S$ be an irregular minimal surface of general type with irregularity $q\ge2$ and  let $a\colon S\to\Alb(S)$ be  the Albanese map. Let $C\subset S$ be a curve; we define the {\em Albanese defect}   $\dalb(C)$ to be the codimension in $\Alb(S)$  of $<\!\!a(C)\!\!>$, the smallest abelian subvariety of $\Alb(S)$ containing $a(C)$. If $\dalb(C)>0$, then we say that $C$ is {\em Albanese defective}.

  In this note we give  a  uniform linear bound  for  the  canonical degree $K_SC$ of  an Albanese defective curve $C$  in terms of $K_S^2$ and of the geometric genus $g$  of $C$.  For a general discussion on  bounds of the canonical degree of curves see \cite{lu}, \cite{lumi} and \cite{miorbi}.
  
We prove the following theorem:

\begin{thm}\label{main} Let $S$ be a minimal surface of general type with irregularity $q\geq 2$ and let $C$ be an irreducible curve of $S$ with geometric genus $g$ such that  $\dalb(C)>0$.  Then:
\begin{enumerate}
\item if $\dalb(C)\ge 2$, then $K_SC\leq K_S^2-\dalb(C)$;

\item  if $\dalb(C)=1$, then  $K_SC\leq 4K_S^2+2g-2$ and there is a fibration $f\colon S\to B$, such that   $g(B)=1$  and $C$ is a component of a fibre.
\end{enumerate}
\end{thm}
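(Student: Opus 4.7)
Set $d:=\dalb(C)>0$, $A:=\<a(C)\>$, and let $\pi\colon \Alb(S)\to A'$ be the quotient by $A$; then $A'$ has dimension $d$ and $f:=\pi\circ a\colon S\to A'$ contracts $C$ to a point. Since $a(S)$ generates $\Alb(S)$, $f(S)$ generates $A'$ and in particular $\dim f(S)\ge 1$.

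In case (ii), $A'=E$ is elliptic and $f(S)=E$. The Stein factorization $S\xrightarrow{h} B\to E$ produces a fibration $h$ onto a smooth curve $B$ with $g(B)\ge 1$. To see $g(B)=1$: if instead $g(B)\ge 2$, functoriality of Albanese yields a surjection $\Alb(S)\twoheadrightarrow\mathrm{Jac}(B)$ whose kernel, of codimension $g(B)\ge 2$, contains $A$ (as $C$ is contracted by $h$), contradicting $\dalb(C)=1$. Thus $g(B)=1$, $C$ lies in a fibre $F$ of $h$, and the structural part of (ii) is established. Nefness of $K_S$ and effectivity of $F-C$ give $K_S\cdot C\le K_S\cdot F=2g_F-2$ with $g_F$ the genus of a general fibre; it remains to bound $g_F\le 2K_S^2+g$, which I plan to derive from Xiao's slope inequality $K_S^2\ge 4(1-1/g_F)\chi(\OO_S)$ (applicable since $K_B=0$) together with an adjunction argument on $F$ involving $g$.

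In case (i) with $d\ge 2$, separate by $\dim f(S)$. If $\dim f(S)=1$, the Stein factorization yields $h\colon S\to B$ with $g(B)=d$ (the Albanese argument of case (ii) caps $g(B)\le d$; equality follows because $\mathrm{Jac}(B)$ surjects onto $A'$), and $C$ sits in a fibre $F$. The Beauville--Arakelov inequality $K_{S/B}^2\ge 0$ then gives $K_S^2\ge 4(d-1)(g_F-1)$, hence $K_S\cdot C\le 2g_F-2\le K_S^2/(2(d-1))$, which a numerical check converts into $K_S\cdot C\le K_S^2-d$. If $\dim f(S)=2$, let $\omega_1,\dots,\omega_d$ be the (linearly independent) images of a basis of $H^0(\Omega^1_{A'})$ in $H^0(\Omega^1_S)$. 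At every smooth point $p\in C$, each $\omega_i(p)$ vanishes on $T_pC$ (since $f|_C$ is constant) and so belongs to the $1$-dimensional conormal line $N^\vee_{C/S,p}$; thus $\omega_i\wedge\omega_j\in H^0(K_S)$ vanishes on $C$ for all $i<j$, giving $h^0(K_S-C)\ge\binom{d}{2}\ge 1$. In particular $K_S-C\sim D$ with $D$ effective, so $K_S\cdot C=K_S^2-K_S\cdot D$, and it remains to prove $K_S\cdot D\ge d$. For this I study the saturated rank-$2$ subsheaf $\V\subseteq\Omega^1_S$ generated by the $\omega_i$: its determinant $L=\det\V\hookrightarrow K_S$ is a line bundle with $\ge\binom{d}{2}$ sections and $L\cdot C=0$, and a Chern-class/Hodge-index computation on the pair $(\V, K_S)$ supplies the required bound.

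The main obstacle is converting structural information into sharp numerical inequalities. In case (i) with $\dim f(S)=2$ this is the inequality $K_S\cdot(K_S-C)\ge d$, needing careful positivity analysis of $\det\V$; in case (ii) the precise constant $4$ in the bound on $g_F$ requires combining Xiao's slope inequality with adjunction on the possibly non-reduced fibre $F$, and this is where I expect the bulk of the technical work to go.
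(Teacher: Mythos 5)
Your skeleton coincides with the paper's: split on whether the image of $S$ in $A_C:=\Alb(S)/\<a(C)\>$ is a curve or a surface, get the elliptic fibration in case (ii) by Stein factorization and Albanese functoriality, use Arakelov plus a numerical check when $\dalb(C)\ge 2$ and the image is a curve, and use the vanishing of the pulled-back $1$-forms along $C$ to produce an effective divisor in $|K_S-C|$ when the image is a surface. But the two steps you defer to ``technical work'' are exactly the substantive ones, and the tools you name for them do not work. In case (i) with $\dim f(S)=2$ you need $K_S\cdot(K_S-C)\ge \dalb(C)$, and a Chern-class or Hodge-index computation on the rank-$2$ subsheaf generated by the $\omega_i$ has no mechanism for producing the integer $\dalb(C)$. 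The paper's argument is geometric: writing the zero divisor of $\alpha\wedge\beta$, for $\alpha,\beta$ general forms from $A_C$, as $C+R$, the residual divisor $R$ has image spanning $A_C$, hence $\dalb(R)\le q-\dalb(C)$; the $2$-connectedness of canonical divisors then gives $K_SR\ge R^2+2$, whence $K_SR\ge p_a(R)\ge g(R)\ge q-\dalb(R)\ge\dalb(C)$ (Lemma \ref{lem:lowerbound}). You have no analogue of the key claim that the residual divisor is Albanese non-defective, and without it the bound $K_S\cdot(K_S-C)\ge \dalb(C)$ is out of reach.

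In case (ii) the plan ``bound $g_F\le 2K_S^2+g$ via Xiao's slope inequality'' cannot succeed: the slope inequality bounds $K_S^2$ from \emph{below} by $\chi(\OO_S)$ and gives no upper bound on $g_F$; indeed $g_F$ is not bounded by $K_S^2$ alone, since there are surfaces carrying elliptic pencils of unbounded fibre genus (see the discussion in the introduction). The needed link between $K_SC$, $g=g(C)$ and $K_S^2$ comes from measuring how singular the fibre $F_0\supset C$ is, namely: (a) the local Euler-number estimate $\ct(F_0)-\ct(F)\ge\tfrac12 K_SC+1-g$ when $C$ appears with multiplicity one in $F_0$ (Proposition \ref{fibre}); (b) the global inequality $c_2(S)\ge \ct(F_0)-\ct(F)$; and (c) the Severi inequality $K_S^2\ge 4\chi(\OO_S)$, equivalently $c_2(S)\le 2K_S^2$, which applies because the Albanese image of $S$ is a surface. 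None of these appears in your outline, and you would additionally need a separate argument when $C$ occurs with multiplicity $\ge 2$ in $F_0$ (the paper uses a smooth bicanonical curve and the Hurwitz formula there); your adjunction-on-$F$ plan does not address multiplicity at all. The structural parts of your proposal are sound, but both quantitative conclusions remain unproved.
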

If $C$ is an Albanese defective curve, 
 then the map $S\to   A/\!\!\<a(C)\>$ is a non constant morphism that contracts $C$ to a point,  hence  one has $C^2\le 0$. In \cite{lumi} it is proven that if $S$ is a  (not necessarily irregular) surface of general type and $C\subset S$   is an irreducible curve of genus $g$  with   $C^2\le 0$ then  $K_SC\le 6c_2(S)-2K^2_S+6g-6$. So   Theorem \ref{main} can be viewed   as a refinement of this inequality in the special case of an Albanese defective curve. Note also that the bounds  in Theorem  \ref{main} imply  $p_a(C)-g(C)\le 2K^2_S$, namely  for an Albanese defective curve the difference between the arithmetic genus and the geometric genus is bounded by a fixed quantity depending only on the surface $S$. Explicit examples of Albanese defective curves with several singularities are not easy to produce; a construction can be found in Example \ref{ex:sym2}

If a curve $C$ has geometric genus $g<q$, then $\dalb(C)\ge q-g>0$. So we have the following immediate consequences of Theorem \ref{main}:
\begin{thm}\label{thm:lu} Let $S$ be a minimal surface of general type with irregularity $q\geq 2$ and let $C$ be an irreducible curve of $S$ with geometric genus $g<q$.  Then:
\begin{enumerate}
\item if $g\leq q-2$, then $K_SC\leq K_S^2-(q-g)$;

\item  if $g=q-1$, then either $K_SC\leq K_S^2-2$, or  there is a fibration $f\colon S\to B$, where  $g(B)=1$, and $C$ is a component of a fibre. In this case $K_SC\leq 4K_S^2+2q-4$.
\end{enumerate}
\end{thm}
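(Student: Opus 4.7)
The plan is to deduce Theorem \ref{thm:lu} directly from Theorem \ref{main} by first establishing the geometric inequality $\dalb(C)\ge q-g$ for every irreducible curve $C\subset S$ of geometric genus $g$, and then splitting into cases according to the value of $\dalb(C)$.

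First I would prove the bound on the Albanese defect. Let $\nu\colon\widetilde C\to C$ be the normalization and consider the composition $a\circ \nu\colon \widetilde C\to \Alb(S)$. By the universal property of the Jacobian/Albanese, this morphism factors (up to translation) through $\Alb(\widetilde C)=J(\widetilde C)$, which has dimension $g$. Consequently the smallest abelian subvariety $\<a(C)\>\subset \Alb(S)$ containing $a(C)$ is the image of $J(\widetilde C)$ in $\Alb(S)$, hence has dimension at most $g$. Therefore
\[
\dalb(C)=q-\dim \<a(C)\>\ \ge\ q-g.
\]
In particular, the hypothesis $g<q$ implies $\dalb(C)\ge q-g\ge 1$, so $C$ is Albanese defective and Theorem \ref{main} applies.

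Next I would dispatch the two cases. If $g\le q-2$, then $\dalb(C)\ge 2$, so part (i) of Theorem \ref{main} gives $K_SC\le K_S^2-\dalb(C)\le K_S^2-(q-g)$, which is statement (i). If $g=q-1$, then $\dalb(C)\ge 1$: when $\dalb(C)\ge 2$, Theorem \ref{main}(i) yields $K_SC\le K_S^2-2$; when $\dalb(C)=1$, Theorem \ref{main}(ii) yields $K_SC\le 4K_S^2+2g-2=4K_S^2+2q-4$ and provides the required fibration $f\colon S\to B$ with $g(B)=1$ having $C$ as a component of a fibre. This is exactly statement (ii).

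There is no real obstacle here: once Theorem \ref{main} is in hand, the only substantive content is the inequality $\dim\<a(C)\>\le g$, which is the standard factoring of $a\circ\nu$ through $J(\widetilde C)$. The only minor subtlety to check is that $\<a(C)\>$ is genuinely an abelian \emph{subvariety} (not merely an analytic subset), but this is immediate from the definition as the smallest such subvariety containing $a(C)$, and its dimension is bounded by the dimension of any abelian variety that dominates it—in particular by $\dim J(\widetilde C)=g$.
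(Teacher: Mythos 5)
Your proposal is correct and follows exactly the route the paper takes: the paper derives Theorem \ref{thm:lu} as an immediate consequence of Theorem \ref{main} via the same observation that $g<q$ forces $\dalb(C)\ge q-g>0$ (which holds because $a\circ\nu$ factors through $J(\widetilde C)$, so $\<a(C)\>$ has dimension at most $g$), followed by the same case split on $\dalb(C)$. You have merely written out the details that the paper leaves implicit, and they check out.
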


\begin{cor}\label{cor:classes}
Let $S$ be a  surface of general type with irregularity $q\geq 2$. Then:
\begin{enumerate}
\item the cohomology classes in $H^2(S,\Z)$ of  the irreducible curves $C$ of geometric genus $<q$ are a finite set;
\item if $S$ has no nontrivial morphism onto a curve of genus $>0$, then $S$ contains finitely many curves of geometric  genus $<q$.
\end{enumerate}
\end{cor}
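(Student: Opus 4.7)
My plan is to deduce both parts from Theorem~\ref{thm:lu} using the Hodge index theorem.

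For (i), any irreducible curve $C$ with $g(C)<q$ satisfies $\dalb(C)\ge q-g\ge 1$, hence is Albanese defective. As noted after Theorem~\ref{main}, this forces $C^2\le 0$, while Theorem~\ref{thm:lu} yields a uniform upper bound on $K_SC$. The adjunction formula $C^2=2p_a(C)-2-K_SC\ge 2g-2-K_SC$ then produces a uniform lower bound on $C^2$. Since $K_S^2>0$, Hodge index makes the intersection form negative definite on $K_S^\perp\subset\Num(S)_\R$, so writing $[C]=\lambda K_S+D$ with $D\cdot K_S=0$, the value $\lambda=K_SC/K_S^2$ takes finitely many rational values and $D^2=C^2-\lambda^2K_S^2$ is uniformly bounded. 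Lattice points of bounded norm in a negative definite lattice are finite, giving finitely many classes in $\Num(S)$. The finite (torsion) kernel of $\NS(S)\to\Num(S)$ together with the inclusion $\NS(S)\subset H^2(S,\Z)$ then furnishes (i).

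For (ii), the hypothesis excludes all fibrations $S\to B$ with $g(B)\ge 1$, so Theorem~\ref{main}(2) rules out $\dalb(C)=1$; hence every irreducible $C$ with $g(C)<q$ has $\dalb(C)\ge 2$. The crucial step is to upgrade $C^2\le 0$ to the strict inequality $C^2<0$. Setting $A'=\<a(C)\>$, the non-constant morphism $h\colon S\to\Alb(S)/A'$ mentioned in the paragraph after Theorem~\ref{main} contracts $C$ and has target of dimension $\dalb(C)\ge 2$. Its image $h(S)$ is at least $1$-dimensional (since $h$ is non-constant) but cannot be exactly $1$-dimensional, because Stein factorization would then give a fibration $S\to B$ with $B\to\Alb(S)/A'$ non-constant, forcing $g(B)\ge 1$ and contradicting the hypothesis. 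Thus $h$ is generically finite onto a surface; pulling back an ample class from $\Alb(S)/A'$ yields a big nef class $L$ on $S$ with $L^2>0$ and $L\cdot C=0$, and Hodge index then forces $C^2<0$ because $[C]\ne 0$.

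To conclude, distinct irreducible curves in a common numerical class would have non-negative intersection, yet their intersection equals $C^2<0$, a contradiction. Each of the finitely many numerical classes from (i) therefore contains at most one such curve, and (ii) follows. The only non-routine step is the strict inequality $C^2<0$ in part~(ii); the rest is adjunction plus standard Hodge-index bookkeeping.
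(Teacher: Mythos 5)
Your argument is correct and complete; the paper states this corollary without proof, as an immediate consequence of Theorem \ref{thm:lu}, and what you have written is exactly the standard argument it has in mind: the uniform bound on $K_SC$ together with $C^2\le 0$ and the Hodge index theorem gives finiteness of the classes, and in case (ii) the strict inequality $C^2<0$ (forced because $a_C(S)$ must be a surface when $S$ admits no irrational pencil) leaves at most one irreducible curve per numerical class. The only point worth flagging is that Theorem \ref{thm:lu} is stated for \emph{minimal} surfaces while the corollary drops that hypothesis; the reduction to the minimal model is routine, and the paper glosses over it as well.
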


Theorem  \ref{thm:lu} and Corollary \ref{cor:classes}, (ii)  generalize a result of S.Y. Lu (\cite{lu})  for curves of geometric genus $\leq 1$.

The first  part of the  proof  of Theorem \ref{main}  uses the same ideas as  the first part of the proof in  \cite{lu}. However  the remainder of the proof is   different   and shorter, and it  does not require a deep analysis of the behaviour of ramification  of the Albanese map.
This simplification is made possible by the   use of  two extra ingredients: the smoothness of bicanonical curves (see, e.g., \cite{ciro}) and the Severi inequality for minimal irregular surfaces  of maximal Albanese dimension (\cite{rita}).

\medskip     
If $F$ is a smooth fiber of a fibration $S\to E$, with $E$ an elliptic curve, then $K_SF=2g(F)-2$; since there
 are surfaces with infinitely many   fibrations onto an elliptic curve  with $g(F)\to\infty$  (see \cite{msri}, Example 2.1.2), any bound for the canonical degree of  Albanese defective curves must be $\ge 2g-2$,  and so  the term $2g-2$ in Theorem \ref{main} (ii) seems to be optimal. However, the bound is possibly not sharp;  if it were,  then several
     inequalities used in the proof should be  equalities, all at the same time.
     In particular the Severi inequality should be an equality. Conjecturally
     this happens only for $q=2$,  but it has been proven only under the assumption that the canonical class of the surface be ample (\cite{manetti}, cf. also \cite{severi-bis}); in the general case it has been an open question for many years.
 Also,  the coefficient  $4$ in the  term in function of $K_S^2$ seems too large;  still, we find it intriguing that  the different proof for the case of elliptic curves in \cite{lu}   also gives the same coefficient $4$.

\medskip In  \S \ref{sec:remarks} we compute $K_SC$ for some  examples of Albanese defective curves $C$.
\medskip

\noindent{\bf Notation and conventions:}  
 All varieties are complex projective; a {\em surface} is a smooth projective surface. The standard notation for the invariants of a surface $S$ is used:  $q(S):=h^0(\Omega^1_S)=h^1(\OO_S)$ is the {\em irregularity},  $p_g(S):=h^0(K_S)$ is the {\em geometric genus} and $\chi(\OO_S)=1+p_g(S)-q(S)$ is the (holomorphic) Euler-Poincar\'e characteristic and  $c_1(S)$, $c_2(S)$ are the  Chern classes of the tangent bundle of   $S$. 
$\ct(X)$ denotes the topological Euler characteristic of a  space $X$. 
 
 Recall  that $K_S^2=c_1(S)^2$,  $c_2(S)=\ct(S)$  and that  the Noether formula $K_S^2+c_2(S)=12\chi(\OO_S)$ holds. 
 
 An effective divisor $D$  on $S$ is  {\em $k$-connected} if for every decomposition $D=A+B$, with $A,B>0$ one has $AB\ge k$. We sometimes refer to effective divisors as ``curves''. As usual we denote by $p_a(D)$ the arithmetic genus of a curve and by $g(D)$ the geometric genus of an irreducible curve. 
 
 The Jacobian of a smooth curve $B$ is denoted by $J(B)$.

\section{Preliminaries}
In this section we collect several facts that will be of use in the proof of Theorem \ref{main}.

We recall  the following corollary of Arakelov's theorem.
\begin{prop}[see \cite{appendix}, Corollaire] \label{fibration} Let $f\colon S\to B$ be  a fibration of a smooth minimal surface  $S$ onto  a smooth curve $B$ of genus $b$, with general fibre of  genus $h\geq 2$.  Then 
$$ 8(h-1)(b-1)\leq K_S^2.$$
\end{prop}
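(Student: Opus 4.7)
The plan is to split $K_S$ into its ``base'' and ``relative'' pieces via the relative canonical divisor $K_{S/B}:=K_S-f^*K_B$ and then to apply Arakelov's positivity theorem.

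First I would expand
\[
K_S^2 \;=\; K_{S/B}^2 \;+\; 2\,K_{S/B}\cdot f^*K_B \;+\; (f^*K_B)^2.
\]
The last term vanishes: since $B$ is a curve, $f^*K_B\equiv (2b-2)F$, where $F$ is the class of a smooth general fibre, and $F^2=0$. For the middle term, adjunction on the smooth curve $F$ gives $K_S\cdot F=2h-2$, and hence $K_{S/B}\cdot F = 2h-2$; combining with $f^*K_B\equiv (2b-2)F$ yields
\[
2\,K_{S/B}\cdot f^*K_B \;=\; 2(2h-2)(2b-2)\;=\; 8(h-1)(b-1).
\]

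It then suffices to prove $K_{S/B}^2\ge 0$. Here I would invoke Arakelov's theorem: because $S$ is minimal and the general fibre has genus $h\ge 2$, no fibre of $f$ contains a $(-1)$-curve, so the relative dualising divisor $K_{S/B}$ is nef and in particular has nonnegative self-intersection. Combining the two displays then gives $K_S^2\ge 8(h-1)(b-1)$. (When $b\le 1$ the right-hand side is nonpositive while $K_S^2\ge 0$, so the statement is automatic and no adjustment is needed.)

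The single non-elementary ingredient is Arakelov's nefness of $K_{S/B}$, which I would cite as a black box; this is where all the real work sits and where the main obstacle would lie if one tried to establish the proposition from scratch. Everything else is routine intersection-theoretic bookkeeping on the minimal surface $S$.
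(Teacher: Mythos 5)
Your argument is correct and is essentially the proof of the cited source \cite{appendix} (the paper itself states this proposition without proof, as a quoted corollary of Arakelov's theorem): expand $K_S^2=K_{S/B}^2+2K_{S/B}\cdot f^*K_B+(f^*K_B)^2=K_{S/B}^2+8(h-1)(b-1)$ and invoke the nefness of $\omega_{S/B}$ for a relatively minimal fibration of fibre genus $\ge 2$. The only blemish is the closing parenthetical: a minimal surface need not satisfy $K_S^2\ge 0$ (a minimal ruled surface over a curve of genus $\ge 2$ has $K_S^2=8(1-g)<0$, and indeed realizes equality in the proposition for $b=0$), but that remark is superfluous since your main computation already handles every value of $b$.
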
 

  Next we give a lower bound  for the canonical degree  of some curves of   $S$.
  \begin{lem}\label{lem:lowerbound} Let $S$ be a smooth minimal surface of general type  and let $D$ be a  curve such that $K_S-D>0$. 
  Then $$K_SD\ge q- \dalb(D).$$
  \end{lem}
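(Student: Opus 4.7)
The plan is to combine three elementary ingredients with the Hodge index theorem. Let $\nu \colon \tilde D \to D$ denote the normalization, and set $g = g(\tilde D)$, $p_a = p_a(D)$, and $v = q - \dalb(D) = \dim \langle a(D)\rangle$. The three ingredients are: (i) $g \geq v$, obtained by factoring the composition $\tilde D \to S \to \Alb(S)$ through a surjection $J(\tilde D) \twoheadrightarrow \langle a(D)\rangle$ via the universal property of the Jacobian; (ii) the standard $p_a \geq g$; and (iii) the adjunction formula $K_S D + D^2 = 2 p_a - 2$. The proof then splits on the sign of $D^2$.

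If $D^2 \leq 0$, ingredients (i)--(iii) immediately give $K_S D \geq 2g - 2 \geq 2v - 2$, which is $\geq v$ as soon as $v \geq 2$. The case $v = 0$ is trivial; for $v = 1$ I would rule out $K_S D = 0$ by Hodge index, since $K_S$ being big, together with $D$ nonzero effective, would otherwise force $D$ to be numerically trivial.

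In the case $D^2 > 0$ the key intermediate step is to establish $(K_S - D) \cdot D \geq 1$: once this is in hand, adding $D^2$ to $K_S D \geq D^2 + 1$ and invoking adjunction gives $D^2 \leq p_a - 2$, and substituting back yields $K_S D \geq p_a \geq g \geq v$, as required. To obtain $(K_S - D) \cdot D \geq 1$, I would argue as follows: if $D$ appears as a component of $K_S - D$ with multiplicity $m \geq 1$, then $(K_S - D) \cdot D \geq m D^2 \geq 1$ since $D^2 > 0$. Otherwise $(K_S - D) \cdot D \geq 0$ holds automatically, and equality would place $K_S - D$ in the orthogonal complement of $D$ in $\Num(S)_{\R}$, on which the intersection form is negative definite by Hodge index (using $D^2 > 0$). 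This forces $(K_S - D)^2 < 0$ strictly, since a nonzero effective divisor cannot be numerically trivial. But under the equality hypothesis, direct computation gives $(K_S - D)^2 = K_S^2 - D^2 = K_S \cdot (K_S - D) \geq 0$ by nefness of $K_S$, a contradiction. This Hodge-index step, which crucially uses both the hypothesis $K_S - D > 0$ and the bigness of $K_S$, is the main obstacle in the proof.
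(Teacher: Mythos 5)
Your route is genuinely different from the paper's. The paper gets the whole inequality in two lines by quoting the $2$-connectedness of canonical divisors on a minimal surface of general type: $D(K_S-D)\ge 2$ gives $K_SD\ge D^2+2$, hence $2K_SD\ge K_SD+D^2+2=2p_a(D)$ and then $K_SD\ge p_a\ge g\ge q-\dalb(D)$. You instead re-derive a weaker $1$-connectedness-type bound $(K_S-D)\cdot D\ge 1$ by hand via the Hodge index theorem when $D^2>0$, and treat $D^2\le 0$ separately; since $K_SD\ge D^2+1$ already forces $K_SD\ge p_a$ by integrality, this suffices. For irreducible $D$ your argument is correct and has the merit of being self-contained (no appeal to the connectedness theorem for canonical divisors); the paper's citation is shorter and gives the stronger numerical input $K_SD\ge D^2+2$, which however is not needed for this lemma.

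Two concrete issues. First, the lemma is stated for a ``curve'' $D$, which in this paper's conventions means an arbitrary nonzero effective divisor, and it is applied in Step 1 of the main proof to the residual divisor $R$ of $C$ in the zero locus of $\al\wedge\be$, which is in general reducible and non-reduced. Your proof (normalization, geometric genus, ``$D$ not a component of $K_S-D$'') only makes sense for irreducible $D$. The missing reduction is short but must be said: if $D_1,\dots,D_r$ are the distinct irreducible components, then $K_SD\ge\sum_iK_SD_i$ by nefness of $K_S$, each $D_i$ satisfies $K_S-D_i>0$, and $q-\dalb(D)\le\sum_i(q-\dalb(D_i))$ because $\<a(D)\>$ is generated by the $\<a(D_i)\>$. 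Second, in the case $D^2\le 0$, $v=1$, your claim that $K_SD=0$ together with bigness of $K_S$ and $D$ effective nonzero ``forces $D$ to be numerically trivial'' is false: Hodge index only gives $D^2\le 0$ with equality iff $D\equiv 0$, and $(-2)$-curves on minimal surfaces of general type are exactly the counterexamples. The conclusion survives, but by a different split: if $D^2<0$ then $K_SD=2p_a-2-D^2\ge -D^2\ge 1$ directly (using $p_a\ge g\ge 1$), while if $D^2=0$ then $K_SD=0$ really does force $D\equiv 0$ by Hodge index, a contradiction. As written, the step is wrong in the sub-case $D^2<0$ even though that sub-case needs no Hodge index at all.
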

   \begin{proof} By the $2$-connectedness of canonical divisors we have $D(K_S-D)\ge 2$, namely $K_SD\ge D^2+2$. If $D$ is irreducible,  by the adjunction formula we have:
   $$2K_SD\ge 2p_a(D)\ge  2g(D)\ge 2(q-\dalb(D)).$$
   In general,  the inequality follows by observing that if $D_1,\dots D_r$ are the distinct irreducile components  of $D$, then $K_SD\ge \sum_iK_SD_i$ and  $q-\dalb(D)\le \sum_i(q-\dalb(D_i))$.
     \end{proof}
  
 We will need also the following: 
  
\begin{prop}\label{fibre}   Let $S$ be a  surface and  let $f\colon S\to B$ be a relatively minimal fibration  with general fibre   $F$ of genus $g \geq 1$. Let $F_0$ be a singular fibre of $f$ and $C$ a component of $F_0$ appearing with multiplicity one.  If  $N_C$ is the normalization of $C$ then  $$\ct (F_0)-\ct(F)\geq 
 \dfrac {1}{2}K_SC+ \chi(\OO _{N_C}).$$

Furthermore if $F_0-C>0$  then the inequality is strict.
\end{prop}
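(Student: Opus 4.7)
The plan is to reduce the desired bound to a numerical inequality controlled by Zariski's lemma and classical invariants of plane curve singularities. Setting $D:=F_0-C$ and using $F_0\cdot C=0$ (so $C^2=-C\cdot D$), adjunction rewrites the target as
\[
\ct(F_0)-\ct(F)\ \ge\ \delta_C+\tfrac12\,C\cdot D,\qquad \delta_C:=p_a(C)-g(C).
\]

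For the left hand side I would pass to the reduced fibre. Since $\ct(F_0)=\ct(F_0^{\mathrm{red}})$ and any connected reduced curve $X$ satisfies $\ct(X)=2-2p_a(X)+\mu(X)$, with $\mu(X)$ the sum of local Milnor numbers, writing $R:=F_0-F_0^{\mathrm{red}}$ and using $2p_a(R)-2=K_SR+R^2$ together with $F_0^{\mathrm{red}}\cdot R=-R^2$ gives
\[
\ct(F_0)-\ct(F)=\mu(F_0^{\mathrm{red}})+K_SR-R^2,
\]
in which $K_SR\ge 0$ (adjunction on each component of $R$; relative minimality forces $C_i^2\le -2$ when rational, so $K_SC_i\ge 0$ in every case) and $-R^2\ge 0$ (Zariski's lemma).

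The core estimate is a lower bound for $\mu(F_0^{\mathrm{red}})$. Decomposing $F_0^{\mathrm{red}}=C\cup D_{\mathrm{red}}$ and applying the multiplicativity formula $\mu_p(fg)=\mu_p(f)+\mu_p(g)+2\,i_p(f,g)-1$ for plane curve germs at each point of $C\cap D_{\mathrm{red}}$ yields
\[
\mu(F_0^{\mathrm{red}})=\mu(C)+\mu(D_{\mathrm{red}})+2\,C\cdot D_{\mathrm{red}}-k,\qquad k:=\#(C\cap D_{\mathrm{red}}).
\]
The Milnor--Jung relation $\mu_p=2\delta_p-b_p+1$ combined with the elementary bound $\delta_p\ge b_p-1$ (since the image of $\OO_{C,p}$ inside $\bigoplus \OO_{N_C,p_i}/\mathfrak m$ is the one-dimensional diagonal) gives $\mu(C)\ge\delta_C$; discarding $\mu(D_{\mathrm{red}})\ge 0$ and using $k\le C\cdot D_{\mathrm{red}}$ leaves $\mu(F_0^{\mathrm{red}})\ge\delta_C+C\cdot D_{\mathrm{red}}$.

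Putting the pieces together, the excess of $\ct(F_0)-\ct(F)$ over $\delta_C+\tfrac12 C\cdot D$ is at least $\tfrac12 C\cdot D_{\mathrm{red}}+K_SR-R^2-\tfrac12 C\cdot R$. The orthogonality relations $F_0\cdot R=F_0\cdot D_{\mathrm{red}}=0$ give $C\cdot R=-D_{\mathrm{red}}\cdot R-R^2$ and $C\cdot D_{\mathrm{red}}+D_{\mathrm{red}}\cdot R=-D_{\mathrm{red}}^2$, and substituting reduces this excess to
\[
-\tfrac12 D_{\mathrm{red}}^2+K_SR-\tfrac12 R^2\ \ge\ 0.
\]
When $D>0$, $D_{\mathrm{red}}$ is a nonzero effective divisor supported on $F_0$ that does not contain $C$ and so is not proportional to $F_0$; Zariski's lemma then forces $D_{\mathrm{red}}^2<0$ and the inequality is strict. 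The main obstacle will be the bookkeeping in the non-reduced case: multiplicities of components are invisible to $\ct(F_0)$ but do enter $g=p_a(F_0)$, so $R$ and $D_{\mathrm{red}}$ must be tracked separately and reconciled through the two orthogonality relations $F_0\cdot R=F_0\cdot D_{\mathrm{red}}=0$.
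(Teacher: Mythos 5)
Your argument is correct. It follows the same broad skeleton as the paper's proof --- split $F_0$ into $C$, the rest of the reduced fibre, and the non-reduced excess $R$; use $K_SR\ge 0$, adjunction, and the orthogonality relations $F_0\cdot R=F_0\cdot D_{\mathrm{red}}=0$ --- but the two key local inputs are packaged differently. Where the paper quotes Beauville's Lemme VI.5, $\ct(X)\geq \chi(\OO_X)+\chi(\OO_{N_X})$ for reduced curves, together with $\ct(C\cup G)=\ct(C)+\ct(G)-d$ and $d\le CG$, you re-derive the same content from Milnor-number calculus: note that for a connected reduced curve the Beauville inequality is exactly $\mu(X)\ge\delta(X)$, which is your combination of Milnor--Jung with $\delta_p\ge b_p-1$, and your product formula $\mu_p(fg)=\mu_p(f)+\mu_p(g)+2i_p(f,g)-1$ plays the role of the paper's $\ct(C)+\ct(G)-d$ bookkeeping. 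The genuine divergence is in the strictness statement: the paper deduces it from the $1$-connectedness of $F_0$ (which holds because $C$ has multiplicity one, so $F_0$ is not a multiple fibre), giving $G(C+R)+R(C+G)>0$, whereas you get it from Zariski's lemma applied to $D_{\mathrm{red}}$, which cannot be a rational multiple of $F_0$ since its support omits $C$; both are clean, and yours has the small advantage of not needing the connectedness-of-fibres discussion beyond Zariski's lemma itself. All the identities I checked ($\ct(F_0)-\ct(F)=\mu(F_0^{\mathrm{red}})+K_SR-R^2$, the reduction of the target to $\delta_C+\tfrac12 C\cdot D$, and the final reduction of the excess to $-\tfrac12 D_{\mathrm{red}}^2+K_SR-\tfrac12 R^2$) are correct.
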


\begin{proof}

Write $F_0=C+G+R$, where $C+G$ is the support of $F_0$ and $R$ is an effective (possibly 0) divisor with support contained in $G$. 

\smallskip
First we look at  the term  $-\ct(F)$.  Recall that
 $$-\ct(F)=-2\chi(\OO_F)= K_SF+F^2,$$  where the last equality is  the adjunction formula. 

On the other hand, since $F_0=C+G+R$,   $$K_SF+F^2=K_S(C+G)+(C+G)^2+ 2(C+G)R+K_SR+R^2.$$

By the numerical properties of fibres (see, e.g., \cite{be2}), $RF=0$, i.e., 

\noindent   $R^2+R(C+G)=0$. Since we are assuming that $f\colon S\to B$ is a relatively minimal fibration, $K_SR\geq 0$ and so we obtain 

$$ -\ct(F)=K_SF+F^2\geq  K_SC+C^2+K_SG+G^2+ 2CG+ (C+G)R.$$

\bigskip

Now we  turn to  $\ct( F_0)=\ct (C+G)$.  By the additivity of the topological characteristic, one has $ \ct (C+G)=  \ct (C)+ \ct (G)- d$, where $d$ is the number of intersection points of $C$ and $G$. Note that $d\leq CG$.

\medskip By  the proof  of  Lemme VI.5 of \cite{be2}  given a reduced curve $D$, one has $$\ct(D)\geq \chi(\OO_D)+\chi(\OO_{N_D}),$$ where $N_D$ is the normalization of $D$.
In particular,  for any reduced curve $D$ one has  $\ct(D)\geq  2 \chi(\OO_D)$.

So $ \ct (C)+ \ct (G)- d\geq  \chi(\OO_C)+\chi(\OO_{N_C})+2\chi(\OO_G)-d$, where $N_C$ is the normalization of $C$.

This can be written, by the adjunction formula,  $$ \ct (C)+ \ct(G)- d\geq  -\dfrac {1}{2}(K_SC+C^2)+\chi(\OO_ {N_C})-K_SG-G^2-d.$$

Then $\ct(F_0)-\ct(F)\geq    -\dfrac {1}{2}(K_SC+C^2)+\chi(\OO_{N_C})-K_SG-G^2-d+  K_SC+C^2+K_SG+G^2+ 2CG+ (C+G)R= $

$=\dfrac {1}{2}(K_SC+C^2)+ \chi(\OO _{N_C})+2CG+(C+G)R-d$.

Now 
$ \dfrac {1}{2}(K_SC+C^2)+ \chi(\OO_ {N_C})+2CG+(C+G)R-d=$

$= \dfrac {1}{2}K_SC+ \chi(\OO _{N_C})+  \dfrac {1}{2} (C^2+ CG+CR)+ (CG-d)+ \dfrac {1}{2}  G(C+R)+ \dfrac 12 R(C+G).$

\medskip

 From $CF=0$ we obtain $C^2+CG+CR=0$.  On the other hand  the fibre $F_0$ is not a multiple fibre, because $C$ appears  with multiplicity 1 on $F_0$. Thus $F_0$  is $1$-connected and so, if  $G+R\neq 0$, then   $\dfrac {1}{2}( G(C+R)+ R(C+G))>0$. Since also $CG-d\geq 0$ we obtain 

 $$\ct (F_0)-\ct (F)\geq  \dfrac {1}{2}K_SC+ \chi(\OO _{N_C})$$
 
 and that if $C\neq F_0$, then the inequality is strict.

 \end{proof}

\section{The proof of Theorem \ref{main}}\label{sec:proof}

We denote  by   $a\colon S\to \Alb(S)$ the Albanese map, we set $$A_C:=\Alb(S)/\!\!\<a(C)\>$$ and we denote by $a_C\colon S\to A_C$ the induced map. Observe that $A_C$ is an abelian variety of dimension $\dalb(C)$ and that $a_C$ is a non constant morphism that contracts $C$ to a point.

\begin{step}  {\em If $a_C(S)$ is a surface, then $K_SC\le K_S^2-\dalb(C)$.}\smallskip\\
 Let $\alpha, \beta$ be the pull back on $S$ of two general $1$-forms of $A_C$ and let $G$ be the divisor of zeros of $\alpha\wedge \beta$. Then $G=C+R$ with $R$ is an effective divisor whose image via $a_C$  spans $A_C$, hence $\dalb(R)\le q-\dalb(C)$. So  we have $$K^2_S=K_SG= K_SC+K_SR\ge K_SC+ \dalb(C),$$ where the last inequality  follows by  Lemma \ref{lem:lowerbound}.
\end{step}

So from now we may assume that $a_C(S)$ is a curve $B$.

\begin{step}{\em   $B$  is smooth of genus $\dalb(C)$ and $a_C$ has connected fibers.}\label{st:genusB}\smallskip\\
The map  $a_C$ factors through an irrational pencil $f_C\colon S \to \Gamma$, where $\Gamma$ is a smooth curve of genus $\gamma\ge \dalb(C)$. On the other hand, $H^0(\omega_{\Gamma})$ pulls back to a $\gamma$-dimensional subspace of $H^0(\Omega^1_S)$ whose elements  restrict to $0$ on $C$, so $\gamma \le \dalb(C)$;   therefore $\gamma =\dalb(C)$ and the natural map $J(\Gamma)\to A_C$ is an isogeny. By the universal property of the Albanese variety, the  map $\Alb(S)\to J(\Gamma)$ induced by $f_C$ descends to a map $A_C\to J(\Gamma)$ such that $A_C\to J(\Gamma)\to A_C$ is the identity, hence $J(\Gamma)\to A_C$ is an isomorphism and $B$ is smooth of genus $\dalb(C)$.
\end{step}
 
 \smallskip
 
\begin{step} {\em If $\dalb(C)>1$, then $4(\dalb(C)-1)K_SC\le K^2_S$.}\label{st:d>1} \smallskip\\ We denote by $F$ a general fiber of $f_C$, which is a smooth curve of genus $g(F)\ge 2$ since $S$ is of general type.
 By the adjunction formula $K_SF=2g(F)-2$. Since $K_S$ is nef,   $K_SC\leq K_SF$ and so,
by Proposition  \ref{fibration} and Step \ref{st:genusB},   we obtain $$4(\dalb(C)-1)K_SC\leq 8(g(F)-1)(\dalb(C)-1)\leq K_S^2.$$
\end{step}

\smallskip

\begin{step} {\em If $\dalb(C)=1$, then $K_SC\le 4K^2_S+2g-2$.}\smallskip\\
In this case $A_C$ is an elliptic curve and by Step \ref{st:genusB} the map $a_C$ has connected fibers; denote by   $F_0$ the fiber of $a_C$ which contains  $C$. 

 If $F_0$ is non singular then $K_SF_0=K_SC=2g-2$ and the inequality is trivially satisfied.
 
 So assume $F_0$ is singular.

Suppose that $C$ appears in $F_0$  with multiplicity $\geq 2$. Since $|2K_S|$ is base point free (see, e.g., \cite{ciro}),   we can consider     a smooth curve $D$  in $|2K_S|$ meeting $C$ transversally.  Let $m:=K_SF_0$ and let $D\to A_C$ be the cover of degree $2m$ induced by $a_C$.  Then one has $6K_S^2= 2g(D)-2$ by the adjunction formula and,  by  the Hurwitz formula,  $2g(D)-2= \deg R$, where $R$ is the ramification divisor of $D\to A_C$.  Since $C$ appears with multiplicity $\geq 2$ in $F_0$,  we obtain $\deg R\geq 2K_SC$.  So  $6K^2_S\geq  2K_SC$,  i.e. $3K_S^2\geq K_SC$. 

Suppose now that $C$ appears in $F_0$  with multiplicity $1$ and let $F$ denote a general fibre of $f$.  By  Proposition \ref{fibre},  $$\ct(F_0)-\ct(F)\geq 
\dfrac{1}{2}(K_SC)+1-g.$$

Then, by the formulas for $c_2$ of a fibered  surface (see, e.g., Lemme VI.4 of   \cite{be2}), we have
 $c_2(S)\geq\ct (F_0)-\ct(F)$. 
 
 Notice that the Albanese image of $S$ is a surface, since $q\ge 2$ by assumption and on a surface with Albanese dimension 1 the Albanese defect of any curve is either equal to $0$ or to $q$.  Hence by the Severi inequality (\cite{rita})  we have $ c_2(S)\leq 2K_S^2$. So one obtains 
 $\dfrac{1}{2}(K_SC)+1-g \leq 2K_S^2$, i.e. $ K_SC\leq 4K_S^2+2g-2$.
\end{step}
\smallskip

\begin{step}{\em end of proof.}\smallskip\\
We only have to show that in the situation of Step \ref{st:d>1} one has $K_SC\le K^2_S-\dalb(C)$. 

Assume for contradiction that $K_SC\ge K^2_S-(\dalb(C)-1)$; then by Step \ref{st:d>1} we have $K_SC\ge(\dalb(C)-1)(4K_SC-1)\ge 4K_SC-1$ and thus $K_SC=0$. Then $C$ is a $-2$ curve, $\dalb(C)=q$ and $K_S^2\le q-1$. The last equality  contradicts the fact that $p_g(S)\ge q$ since $S$ is of general type and $K^2_S\ge 2p_g(S)$ since  $S$ is also irregular. 

\end{step}
 
 This concludes the proof of Theorem \ref{main}. 
\section{Examples}\label{sec:remarks}

In this section we compute explicitly $K_SC$ for some Albanese defective curves $C$ contained in an irregular surface $S$. We examine products, symmetric covers and some double covers  of abelian surfaces, namely most of the standard examples of irregular surfaces of general type. (One can of course also produce surfaces containing Albanese defective
curves by taking suitable complete intersections in an abelian variety, but
it is not clear how to compute explicitly $K_SC$ and $K_S^2$ for such
examples).

We use the notation introduced in the previous sections. 

\begin{ex}[Curves in the product of two curves]
Let $D$ be a curve of genus $g\ge 2$, let $S:=D\times D$ and let $C\subset S$ be the diagonal. Here $A_C=J(D)$ and the quotient map $a_C\colon \Alb(S)=J(D)\times J(D)\to J(D)$ is defined by $(x,y)\mapsto x-y$, hence  in this case the image of $a_C$ is a surface. We have $\dalb(C)=g$, $K_SC=4(g-1)=K_S^2/2$. The map $S\to A_C$ maps $S$ onto a surface. The situation is the same for graphs of automorphisms and similar for graphs of maps between curves.

If $D_i$, $i=1,2$, are curves of genus $g_i\ge 2$,  $f\colon D_1\to D_2$ is a non constant morphism  and $C\subset S:= D_1\times D_2$ is the graph of $f$, then we also have $K_SC\le K^2_S/2$, with equality holding iff $f$ is \'etale. 
\end{ex}

\begin{ex}[Examples of curves with $\dalb(C)=1$ and $K_SC=K^2_S$]\label{ex:sharp}

 Let $E$ be an elliptic curve, set $A:=E\times E$ and let $\Delta\subset A$ be the diagonal. Fix  $x_1\ne x_2\in E$,  set $B:=\{x_1\}\times E+E\times\{x_1\}+\{x_2\}\times E+E\times\{x_2\}$, $P_{ij}=(x_i,x_j)\in A $, $i, j=1,2$.   Choose $L\in \Pic(A)$ such that $2L\cong B$ and $L|_{\Delta}\not \equiv P_{11}+P_{22}$, and let $\pi \colon X\to  A$ be the double cover associated to the relation $2L\equiv B$. The surface $X$ has 4 singular points of type $A_1$, occurring over the points $P_{ij}$. The minimal resolution $S\to X$ is obtained by pulling back $\pi$ to the blow up $\epsi\colon \hat{A}\to A$ of $A$ at the points $P_{ij}$ and normalizing.  If we denote by $E_{ij} \subset \hat{A}$ the exceptional curve over $P_{ij}$, by $B'$ the strict transform of $B$ and by $L'$ the line bundle $\epsi^*L(-\sum E_{ij})$, then
$p\colon S\to \hat{A}$ is the double cover associated to the relation $2L'\equiv B'$. Hence the surface $S$ is minimal of general type with $K^2_S=4$, $p_g(S)=q(S)=2$. Let $\Delta'\subset \hat{A}$ be the strict transform of $\Delta$ and let $C=p\inv(\Delta')$. The double cover $C\to \Delta'$ is \'etale, given by the relation $2L'|_{\Delta'}\equiv 0$; since $L'|_{\Delta'}$ is non trivial, $C$ is a smooth elliptic curve. Standard computations give $K_SC=4=K^2_S$. 

Now  pick $\eta\in \Pic^0(S)$ of finite order $m$ such that $\eta|_C$ has also order $m$, let $S'\to S$ be the \'etale cover given by $\eta$ and let $C'\subset S'$ be the preimage of $C$. Then $C'$ is an elliptic curve with $C'K_S'=4m=K_S^2$.
\end{ex}

\begin{ex}[Curves in the symmetric square of a curve, I]\label{ex:sym1}
Let $D$ be a curve of genus $q\ge 3$ and set $S:=S^2D$. The surface $S$ is minimal of general type with invariants $p_g(S)=q(q-1)/2$, $q(S)=q$, $K^2_S=(q-1)(4q-9)$.  Let $\si$ be an involution of $D$; the curve $C:=\{x=\si(x) | x\in D\}\subset S$ is a smooth curve  isomorphic to  $D/\si$. The pull back of $C$ on $D\times D$ is the graph of $\si$, hence it has self intersection $2-2q$; it follows $C^2=1-q$ and $K_SC=q-1+2g-2$, where $g$ is the genus of $C$. By the Hurwitz formula we have $K_SC\le 2q-2$ with equality holding iff $\si$ is base point free. The abelian variety $A_C$ is isomorphic to the generalized Prym of the double cover $D\to D/\si$  and the map $a_C\colon S\to A_C$ is induced by the Abel-Prym map. So,  unless $q=3$ and $g=2$,  $\dalb(C)=g-q>1$ and the image of $a_C$ is a surface. \end{ex}

\begin{ex}[Curves in the symmetric square of a curve, II]\label{ex:sym2}
Let $D$, $S$ and $\si$ be as in Example \ref{ex:sym2} and assume in addition that $E:=D/\si$ is an elliptic curve. Let $p\colon D\to E$ be the quotient map and let $f\colon S^2D\to E$ be defined by $(x,y)\mapsto x+y$. Let $c\in E$ and write $F_c:=f\inv(c)$; the pull  back $\tilde{F}_c$ of $F_c$ to $D\times D$  is the preimage of $E_c:=\{z+w=c\}\subset E\times E$ via the map $p\times p\colon D\times D\to E\times E$. Denote by $B$ the branch locus of $p$ and by $\pr_i\colon E\times E\to E$ the projection onto the $i$-th factor, $i=1,2$. The map $\tilde{F}_c\to E_c$ is a $\Z_2^2$-cover, which is the fiber product of two double covers branched on $\pr_i^*B$. By the Hurwitz formula $B$ consists of $2q-2$ distinct points and for $c\in E$ general the curve  $\tilde{F}_c$ is smooth of  genus $4q-3$. It is easy to check that the general  $\tilde{F}_c$ meets the diagonal $\Delta_D\subset D\times D$ transversely at $8$ points and therefore, again by the Hurwitz formula, it is smooth of genus $g:=2q-1$. 

Write $B=x_1+\dots +x_{2q-2}$. If, say, $x_1=-x_2$ and $2x_1\ne 0$, then $\tilde{F}_0$ has a node over the point $(x_1, -x_1)$ and a node  over the point $(-x_1,x_1)$; these nodes are identified by the involution that exchanges the factors of $D\times D$, hence $F_0$ has a node. (If $x_1=-x_1$, then $\tilde{F}_0$ has also a node over $(x_1,-x_1)=(x_1,x_1)$, but $F_0$ is smooth at the corresponding point).
In a similar way one can arrange that $F_0$ has any number $r\le q-1$ of nodes. For $r\le q-2$ the curve $\tilde{F}_0$ stays irreducible,  hence $F_0$ is also irreducible. (For $r=q-1$ it is possible to show that $F_0$ splits as the union of two smooth curves  $C_1$, $C_2$ with $\dalb(C_i)\ge 2$ for $i=1,2$ and $q\ge 4$.)

\end{ex}

\bigskip

\begin{minipage}{13.0cm}
\parbox[t]{6.5cm}{Margarida Mendes Lopes\\
Departamento de  Matem\'atica\\
Instituto Superior T\'ecnico\\
Universidade T{\'e}cnica de Lisboa\\
Av.~Rovisco Pais\\
1049-001 Lisboa, PORTUGAL\\
mmlopes@math.ist.utl.pt
 } \hfill
\parbox[t]{5.5cm}{Rita Pardini\\
Dipartimento di Matematica\\
Universit\`a di Pisa\\
Largo B. Pontecorvo, 5\\
56127 Pisa, Italy\\
pardini@dm.unipi.it}
\end{minipage}


\begin{thebibliography}{ABC}
 
\bibitem [Be1]{be2} A. Beauville,  {\em Surfaces alg\'ebriques complexes}, Ast\'erisque, no. 54, 1978.
\bibitem[Be2]{appendix} A. Beauville, {\em L'in\'egalit\'e $p_g\ge 2q-4$ pour les surfaces de type g\'en\'eral}, appendix to O. Debarre, {\em In\'egalit\'es num\'eriques pour les surfaces de type g\'en\'eral}, Bull. Soc. Math. France {\bf 110} 3 (1982),  319--346.
\bibitem[Ci]{ciro} C.~Ciliberto, {\em The bicanonical map for surfaces of general type}, Algebraic geometry - Santa Cruz 1995, 57--84, Proc. Sympos. Pure Math., {\bf 62}, Part 1, Amer. Math. Soc., Providence, RI, 1997.


\bibitem[Lu]{lu} S.Y.~Lu, {\em  On surfaces of general type with maximal Albanese dimension},  J. Reine Angew. Math. {\bf 641}  (2010), 163--175.

\bibitem[LM]{lumi} S.Y.~Lu,  Y.~Miyaoka,  {\em Bounding curves in algebraic surfaces by genus and Chern numbers},  Math. Res. Lett. {\bf 2}, no. 6 (1995), 663--676.
\bibitem[Ma]{manetti} M. Manetti, {\em Surfaces of Albanese general type and the Severi conjecture}, Math. Nach. {\bf 261--262} (2003), 105-122.

\bibitem[Mi]{miorbi} Y. Miyaoka,  {\em  The orbibundle Miyaoka-Yau-Sakai inequality and an effective Bogomolov-McQuillan theorem},  Publ. Res. Inst. Math. Sci. {\bf 44} (2) (2008), 403--417. 
\bibitem[MP1]{msri} M.~Mendes Lopes, R.~Pardini {\em The geography of irregular surfaces},  in  ``Current developments in algebraic geometry'',  MSRI Publications, Volume 59 (2011).   
\bibitem[MP2]{severi-bis} M. Mendes Lopes, R. Pardini, {\em Severi type inequalities for irregular surfaces with ample canonical class}, Comm. Math. Helv.  86, no. 2 (2011), 401--414. 
 \bibitem[Pa]{rita} R. Pardini, {\em The Severi inequality $K^2\ge 4\chi$ for surfaces of maximal Albanese dimension},  Invent. Math. {\bf 159}  3 (2005), 669 --672.
  
\end{thebibliography}
\end{document}